\tiny\color{gray},
\newcommand{\myQ}{Q(r,s,t)}
\DeclareMathOperator{\CDF}{CDF}
\DeclareMathOperator{\PDF}{PDF}
\newtheorem{theorem}{Theorem}
\newtheorem{lemma}[theorem]{Lemma}
\definecolor{xdxdff}{rgb}{0.6588235294117647,0.6588235294117647,0.6588235294117647}
\definecolor{zzttqq}{rgb}{0.26666666666666666,0.26666666666666666,0.26666666666666666}
\definecolor{qqqqff}{rgb}{0.3333333333333333,0.3333333333333333,0.3333333333333333}
\definecolor{cqcqcq}{rgb}{0.7529411764705882,0.7529411764705882,0.7529411764705882}
\title{Triangle Inscribed-Triangle Picking}
\author{Arman Maesumi}
\begin{document}
\maketitle

\paragraph{Abstract.}
Given a triangle $ABC$, we derive the probability distribution function and the moments of the area of an inscribed triangle $RST$ whose vertices are uniformly distributed on $AB, BC$ and $CA$. The theoretical results are confirmed by a Monte Carlo simulation.

\paragraph{Keywords.} 
Geometric probability,  triangle triangle picking.

\paragraph{AMS Subject Classifiction.}
60D05.

\section{Introduction}

In 1865, James Joseph Sylvester proved \cite{MathematicalQuestions} that the average area of a random triangle, whose vertices are picked inside a given triangle of unit area, is equal to ${1}/{12}$. This problem, originally proposed by S. Watson, and  known as Triangle Triangle Picking, is one of the earliest examples of Geometric Probability \cite{Solomon}. Many similar problems  have been proposed \cite{Probability, CNSS}, including Sylvester's own four-point problem \cite{SylvesterFour} which asks for the probability that four random points in a convex shape have a convex hull which is a quadrilateral. Problems involving properties of inscribed geometric figures have also been studied; for example,  questions related to the average distance of inscribed points appear in \cite{BBKW}, while in \cite{RandomTriangles} the average area and perimeter of a triangle inscribed in a circle is found. Here we consider a class of  such problems where the interior polygon has its vertices on the edges of the base convex polygon, with one vertex per side. In particular we look at the properties of a random triangle that is inscribed in a fixed triangle.

\section{An Application of Barycentric Coordinates}\label{sec:one}

A simple and effective way of describing triangles within triangles is to use the barycentric coordinates.
 Suppose the vertices of a triangle are denoted by the vectors $\vec{A}, \vec{B}, \vec{C}$.
The barycentric coordinates \cite{Barycentric} of a point $\vec P$, with respect to the triangle $ABC$,  is $(\alpha,\beta,\gamma)$ if $\vec P = \alpha \vec A + \beta \vec B + \gamma \vec C$, and $\alpha + \beta + \gamma = 1$. Bottema's theorem \cite{Bottema} gives the area of a triangle if the barycentric coordinates of its vertices are known  with respect to another triangle. 

\begin{theorem}
	[Bottema] Let $|\Delta ABC|$ represent the signed area of triangle ABC.  Assume the vertices $P_{i}$ of a triangle $P_{1}P_{2}P_{3}$ have barycentric coordinates $(x_{i}, y_i, z_i)$,  with respect to the triangle  $ABC$, then,
	
	\begin{equation}
	|\Delta P_1P_2P_3| = \det \left[ \begin{array}{ccc}
	x_1 & y_1 & z_1 \\
	x_2 & y_2 & z_2 \\
	x_3 & y_3 & z_3 \end{array} \right]|\Delta ABC|.
	\end{equation}

\end{theorem}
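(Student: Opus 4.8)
The plan is to reduce Bottema's identity to the multiplicativity of the determinant by writing the Cartesian coordinates of the $P_i$ as a single matrix product. First I would fix a Cartesian frame and set $\vec A = (a_1, a_2)$, $\vec B = (b_1, b_2)$, $\vec C = (c_1, c_2)$, and likewise $\vec P_i = (p_i, q_i)$. The standard signed-area formula then gives
\[
|\Delta P_1 P_2 P_3| = \frac{1}{2} \det \begin{bmatrix} p_1 & q_1 & 1 \\ p_2 & q_2 & 1 \\ p_3 & q_3 & 1 \end{bmatrix},
\]
and the same formula applied to $ABC$ shows that $2\,|\Delta ABC|$ equals the analogous determinant built from the rows $(a_1, a_2, 1)$, $(b_1, b_2, 1)$, $(c_1, c_2, 1)$.

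The crux of the argument is the observation that the defining relations $\vec P_i = x_i \vec A + y_i \vec B + z_i \vec C$, taken together with the barycentric constraint $x_i + y_i + z_i = 1$, are exactly what is needed to factor the coordinate matrix of the $P_i$:
\[
\begin{bmatrix} p_1 & q_1 & 1 \\ p_2 & q_2 & 1 \\ p_3 & q_3 & 1 \end{bmatrix}
=
\begin{bmatrix} x_1 & y_1 & z_1 \\ x_2 & y_2 & z_2 \\ x_3 & y_3 & z_3 \end{bmatrix}
\begin{bmatrix} a_1 & a_2 & 1 \\ b_1 & b_2 & 1 \\ c_1 & c_2 & 1 \end{bmatrix}.
\]
The first two columns of the product merely reproduce the linear-combination formulas for $p_i$ and $q_i$, while the third column is where the hypothesis does its work: the $(i,3)$ entry of the product is $x_i + y_i + z_i$, which equals $1$ precisely because the coordinates are barycentric. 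This factorization is the only genuinely clever step, and I expect it to be the main point to get right; once it is in place the rest is mechanical.

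From here the conclusion is immediate. Taking determinants of both sides and applying multiplicativity, $\det(MN) = \det(M)\det(N)$, the right-hand side splits into the barycentric determinant times the $ABC$-coordinate determinant. Multiplying through by $\tfrac{1}{2}$, the left-hand side becomes $|\Delta P_1 P_2 P_3|$, while the factor $\tfrac{1}{2}$ combines with the $ABC$-coordinate determinant to give exactly $|\Delta ABC|$, yielding the stated identity. The only bookkeeping subtlety is that every area in sight is \emph{signed}, so one must carry the signed-area determinant throughout rather than its absolute value, ensuring that orientation is tracked correctly through the matrix product; no case analysis is otherwise needed.
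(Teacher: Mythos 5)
Your proposal is correct, but there is nothing in the paper to compare it against: the paper does not prove this statement at all. It quotes Bottema's theorem as a known result, with a citation to O.~Bottema's 1982 \emph{Crux Mathematicorum} article, and then uses it as a black box to compute the moments of the area ratio. So your proof is a genuine supplement rather than an alternative route. As for the proof itself, it is complete and the key step is exactly right: writing each $\vec P_i = x_i \vec A + y_i \vec B + z_i \vec C$ and using the constraint $x_i + y_i + z_i = 1$ yields the factorization
\begin{equation*}
\begin{bmatrix} p_1 & q_1 & 1 \\ p_2 & q_2 & 1 \\ p_3 & q_3 & 1 \end{bmatrix}
=
\begin{bmatrix} x_1 & y_1 & z_1 \\ x_2 & y_2 & z_2 \\ x_3 & y_3 & z_3 \end{bmatrix}
\begin{bmatrix} a_1 & a_2 & 1 \\ b_1 & b_2 & 1 \\ c_1 & c_2 & 1 \end{bmatrix},
\end{equation*}
where the first two columns encode the linear combinations and the third column is precisely the barycentric normalization; multiplicativity of the determinant and the shoelace formula $2\,|\Delta| = \det[\,\cdot\,]$ for both triangles then finish the argument, with the factor of $\tfrac12$ cancelling cleanly. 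Your remark about carrying \emph{signed} areas throughout is also the correct bookkeeping, and it matches the paper's convention, since the theorem as stated there is explicitly about signed area (indeed the determinant $rst + (1-r)(1-s)(1-t)$ the paper later computes can only be interpreted consistently with signs). One small observation you could add for completeness: the factorization argument implicitly uses that $A$, $B$, $C$ are not collinear (so that $|\Delta ABC| \neq 0$ and barycentric coordinates are well defined), which is harmless here but worth stating.
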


By using the above theorem we can easily calculate the moments of the area of the inscribed triangle.

\begin{theorem}
	Given a triangle $ABC$, if three points $R,S,$ and $T$ are chosen uniformly on the sides $AB, BC,$ and $ CA$ respectively then the average area of $RST$ is one-fourth of the area of $ABC$. 
\end{theorem}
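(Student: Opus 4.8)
The plan is to describe each random point by its barycentric coordinates with respect to $ABC$, invoke Bottema's theorem to turn the area ratio into a determinant, and then integrate term by term. First I would parametrize the three points: a uniformly chosen $R$ on $AB$ can be written $\vec R = (1-r)\vec A + r\vec B$, giving barycentric coordinates $(1-r, r, 0)$; likewise $S$ on $BC$ has coordinates $(0, 1-s, s)$ and $T$ on $CA$ has coordinates $(t, 0, 1-t)$, where $r, s, t$ are independent and uniform on $[0,1]$.

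Feeding these three coordinate triples as the rows of the matrix in Bottema's theorem, the determinant expands to
\begin{equation}
\frac{|\Delta RST|}{|\Delta ABC|} = \det \begin{bmatrix} 1-r & r & 0 \\ 0 & 1-s & s \\ t & 0 & 1-t \end{bmatrix} = (1-r)(1-s)(1-t) + rst .
\end{equation}
Before averaging, I would pause on one subtlety that is easy to overlook: Bottema's theorem returns a \emph{signed} area, so in principle the ratio could be negative and the expectation of the signed quantity need not coincide with the expectation of the area. Here, however, both terms on the right are products of numbers in $[0,1]$ and hence nonnegative, so the orientation never flips and the displayed expression already equals the ratio of \emph{unsigned} areas. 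This observation is the one genuine obstacle; once it is settled, no case analysis on orientation is required.

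The remaining step is a routine expectation. Using independence of $r,s,t$ together with $E[r]=E[1-r]=\tfrac12$ and the analogous identities for $s$ and $t$, the two summands each contribute $\left(\tfrac12\right)^3 = \tfrac18$, so the expected ratio is $\tfrac18 + \tfrac18 = \tfrac14$, establishing that the mean area of $RST$ is one-fourth that of $ABC$. The same determinant identity will also be the natural starting point for the higher moments promised in the abstract, since raising the right-hand side to a power and integrating is a direct continuation of this argument.
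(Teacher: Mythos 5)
Your proposal is correct and follows essentially the same route as the paper: barycentric coordinates for the three points, Bottema's theorem to express the area ratio as the determinant $rst+(1-r)(1-s)(1-t)$, and independence of $r,s,t$ to get $\tfrac18+\tfrac18=\tfrac14$. Your added remark that both determinant terms are nonnegative (so the signed and unsigned areas coincide) is a small refinement the paper leaves implicit, but it does not change the argument.
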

\begin{proof}
 Consider an inscribed triangle whose vertices $R,S,T,$ are defined as
\begin{equation}
\begin{cases}
\vec{R} = \vec{B}+r \   \overrightarrow{BC}\\
\vec{S} = \vec{C}+s \  \overrightarrow{CA}\\
\vec{T} = \vec{A}+t \  \overrightarrow{AB}
\end{cases}
\end{equation}
 where $r,s,t$ are uniformly distributed random numbers in $[0,1]$. 

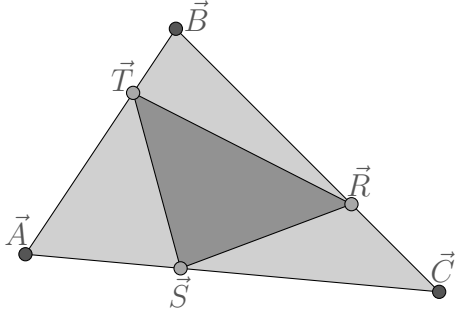
\begin{figure}[h]
	\centering
	\hspace*{4.2em}\begin{tikzpicture}[line cap=round,line join=round,>=triangle 45,x=.5cm,y=.5cm]
	\clip(1.0241471171800514,0.2292121556504571) rectangle (17.231040580033817,9.46039625697576);
	\fill[color=zzttqq,fill=zzttqq,fill opacity=0.25] (2.,2.) -- (6.,8.) -- (13.,1.) -- cycle;
	
	\fill[color=zzttqq,fill=qqqqff,fill opacity=0.5] (4.862853203572577,6.294279805358864)-- (6.124763458659939,1.6250215037581874) -- (10.668356437672603,3.331643562327397) -- cycle;
	
	\draw (2.,2.)-- (6.,8.);
	\draw (6.,8.)-- (13.,1.);
	\draw (13.,1.)-- (2.,2.);
	\draw (4.862853203572577,6.294279805358864)-- (6.124763458659939,1.6250215037581874);
	\draw (4.862853203572577,6.294279805358864)-- (10.668356437672603,3.331643562327397);
	\draw (6.124763458659939,1.6250215037581874)-- (10.668356437672603,3.331643562327397);
	\begin{scriptsize}
	\draw [fill=qqqqff] (2.,2.) circle (2.5pt);
	\draw[color=qqqqff] (1.7522750261982737,2.66109408467515) node {\large $\vec{A}$};
	\draw [fill=qqqqff] (6.,8.) circle (2.5pt);
	\draw[color=qqqqff] (6.545338410625247,8.343120192771338) node {\large $\vec{B}$};
	\draw [fill=qqqqff] (13.,1.) circle (2.5pt);
	\draw[color=qqqqff] (13.098172465166735,1.6733029783652823) node {\large $\vec{C}$};
	\draw [fill=xdxdff] (4.862853203572577,6.294279805358864) circle (2.5pt);
	\draw[color=qqqqff] (4.5769614956357,6.751140628966698) node {\large $\vec{T}$};
	\draw [fill=xdxdff] (6.124763458659939,1.6250215037581874) circle (2.5pt);
	\draw[color=qqqqff] (6.080617880862229,1.0017271977559027) node {\large $\vec{S}$};
	\draw [fill=xdxdff] (10.668356437672603,3.331643562327397) circle (2.5pt);
	\draw[color=qqqqff] (10.857479183664191,3.954709996646954) node {\large $\vec{R}$};
	\end{scriptsize}
	\end{tikzpicture}
\caption{Triangle $ABC$, and an inscribed triangle $RST$.}
\end{figure}
In this case, the points $R,S,T$ are respectively given by barycentric coordinates $(0, r, 1-r),(1-s, 0, s)$ and $(t, 1-t, 0)$. Now we define $\myQ$ as the quotient $|\Delta RST|/|\Delta ABC|$. Therefore, by Bottema's theorem
\begin{equation}
 \myQ = \det \left[ \begin{array}{ccc}
 0 & r & 1-r \\
 1-s & 0 & s \\
 t & 1-t & 0 \end{array} \right] = rst + (1-r)(1-s)(1-t).
\end{equation} 
Now we will set out to calculate $E[Q]$, the expected value of $\myQ$. The expected value of $rst$, and $(1-r)(1-s)(1-t)$, can be represented by the product of the expected values of $r, s, t$. Specifically,
 \begin{equation}
 \begin{split}
 E[\myQ] &= E[rst + (1-r)(1-s)(1-t)] \\ 
 &= \int_{0}^{1} \int_{0}^{1} \int_{0}^{1} (rst + (1-r)(1-s)(1-t))\, dr\,ds\,dt \\
 &= \left(\dfrac{1}{2}\right)^3 + \left(\dfrac{1}{2}\right)^3 = \dfrac{1}{4}.
 \end{split}
 \end{equation}
As a result, $E[|\Delta RST|] = \dfrac{1}{4}|\Delta ABC|$.
\end{proof}

\subsection{The \boldmath{$n^{th}$}  moment, \boldmath{$E[Q^n(r,s,t)]$}}
To derive the $n^{th}$ moment of the area, we expand $Q^n(r,s,t)$ using the Binomial Theorem,
\begin{equation}
	\begin{split}
		Q^n(r,s,t) &= [rst + (1-r)(1-s)(1-t)]^n \\
		& = \sum_{k=0}^{n} \dfrac{n!}{k!(n-k)!}(rst)^{n-k}((1-r)(1-s)(1-t))^k \\
		& = \sum_{k=0}^{n} \dfrac{n!}{k!(n-k)!}r^{n-k}(1-r)^ks^{n-k}(1-s)^kt^{n-k}(1-t)^k.
	\end{split}
\end{equation}
The average value of $(rst)^{n-k}((1-r)(1-s)(1-t))^k$, can be found using the Euler  beta function \cite{Beta}
\begin{equation}
	E[ r^{n-k}(1-r)^k ] = 
	\int_{0}^{1}r^{n-k}(1-r)^kdr    
	=\dfrac{(n-k)!k!}{(n+1)!}.
\end{equation}
Thus, $\mu_n$, the $n^{th}$ moment of $\myQ$, can now be expressed as
\begin{equation}\label{nthmoment}
	\begin{split}
		\mu_n=E[Q^n(r,s,t)] &= \sum_{k=0}^{n}\dfrac{n!}{k!(n-k)!}\left(\frac{(n-k)!k!}{(n+1)!}\right)^3 \\
		&=\dfrac{1}{(n+1) (n+1)!^2}\sum_{k=0}^{n}(n-k)!^2k!^2.
	\end{split}
\end{equation} 
Therefore, $E[|\Delta RST|^n] =\mu_n |\Delta ABC|^n$.
We recorded the sum in \eqref{nthmoment} as Sloane integer sequence A279055, \cite{Arman}. In particular, the first few moments are as shown in Table $\ref{table:NineMoments}$.

\begin{table}[H]
	\centering
	\begin{tabular}{ c | c c c c c c c c}
		$n$ & 1 & 2 & 3 & 4 & 5 & 6 & 7 \\\\
		$\mu_n$ & $\dfrac{2}{2\cdot 2!^2}$ & $\dfrac{9}{3\cdot 3!^2}$ & $\dfrac{80}{4\cdot 4!^2}$ & $\dfrac{1240}{5\cdot 5!^2}$ & $\dfrac{30240}{6 \cdot 6!^2}$ & $\dfrac{1071504}{7 \cdot 7!^2}$ & $\dfrac{51996672}{8\cdot 8!^2}$ 
	\end{tabular}
	\caption{The first seven moments of $\myQ$.}\label{table:NineMoments}
\end{table}

\section{A Monte Carlo Simulation of the Probability Density Function}
A Monte Carlo simulation  \cite{MC1,MC2}  was conducted to numerically study and validate the theoretical findings for  the distribution of the area of a randomly generated inscribed triangle. 
The output of the simulation is the experimental probability density function as depicted in \ref{fig:SharkFin}. We derive the elementary functions that produce this curve in the next section.

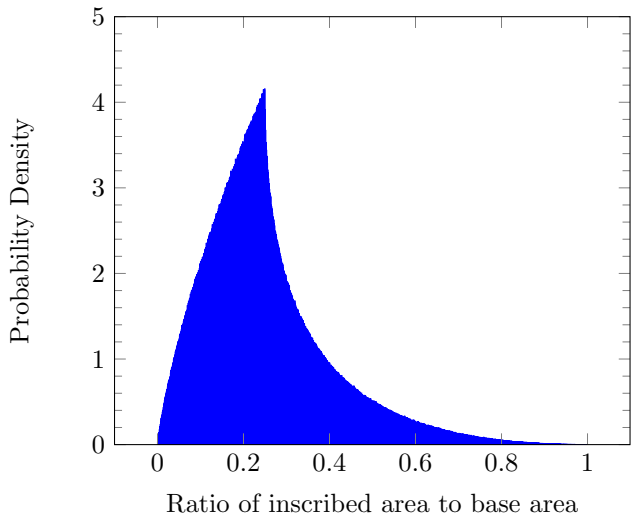
\begin{figure}[h] 
	\centering
	\begin{tikzpicture}
	\begin{axis}[
	xlabel=Ratio of inscribed area to base area,
	ylabel=Probability Density,
	ymin=0, ymax=5,
	minor y tick num = 4,
	area style,
	]
	\addplot+[ybar interval,mark=no] plot coordinates {(0.0000,0.0278) (0.0010,0.0652) (0.0020,0.1017) (0.0030,0.1407) (0.0040,0.1729) (0.0050,0.2079) (0.0060,0.2336) (0.0070,0.2616) (0.0080,0.3046) (0.0090,0.3204) (0.0100,0.3463) (0.0110,0.3781) (0.0120,0.4002) (0.0130,0.4315) (0.0140,0.4640) (0.0150,0.4845) (0.0160,0.5138) (0.0170,0.5467) (0.0180,0.5624) (0.0190,0.5908) (0.0200,0.6130) (0.0210,0.6432) (0.0220,0.6462) (0.0230,0.6736) (0.0240,0.7092) (0.0250,0.7315) (0.0260,0.7496) (0.0270,0.7610) (0.0280,0.7766) (0.0290,0.7940) (0.0300,0.8326) (0.0310,0.8658) (0.0320,0.8823) (0.0330,0.8985) (0.0340,0.9156) (0.0350,0.9435) (0.0360,0.9577) (0.0370,0.9916) (0.0380,1.0132) (0.0390,1.0189) (0.0400,1.0462) (0.0410,1.0815) (0.0420,1.0960) (0.0430,1.1087) (0.0440,1.1257) (0.0450,1.1520) (0.0460,1.1655) (0.0470,1.1976) (0.0480,1.2052) (0.0490,1.2090) (0.0500,1.2342) (0.0510,1.2691) (0.0520,1.2896) (0.0530,1.2953) (0.0540,1.3168) (0.0550,1.3401) (0.0560,1.3863) (0.0570,1.3929) (0.0580,1.3900) (0.0590,1.4095) (0.0600,1.4343) (0.0610,1.4574) (0.0620,1.4643) (0.0630,1.5045) (0.0640,1.4878) (0.0650,1.5242) (0.0660,1.5538) (0.0670,1.5439) (0.0680,1.5699) (0.0690,1.6225) (0.0700,1.6219) (0.0710,1.6498) (0.0720,1.6762) (0.0730,1.6532) (0.0740,1.6979) (0.0750,1.7101) (0.0760,1.7551) (0.0770,1.7465) (0.0780,1.7805) (0.0790,1.7971) (0.0800,1.8049) (0.0810,1.7768) (0.0820,1.8321) (0.0830,1.8372) (0.0840,1.8630) (0.0850,1.8791) (0.0860,1.9082) (0.0870,1.9323) (0.0880,1.9270) (0.0890,1.9590) (0.0900,1.9555) (0.0910,1.9854) (0.0920,1.9832) (0.0930,2.0045) (0.0940,2.0023) (0.0950,2.0330) (0.0960,2.0281) (0.0970,2.0552) (0.0980,2.1117) (0.0990,2.0856) (0.1000,2.1378) (0.1010,2.1366) (0.1020,2.1549) (0.1030,2.1556) (0.1040,2.1848) (0.1050,2.1780) (0.1060,2.1974) (0.1070,2.2265) (0.1080,2.2314) (0.1090,2.2365) (0.1100,2.2768) (0.1110,2.2870) (0.1120,2.2999) (0.1130,2.3507) (0.1140,2.3531) (0.1150,2.3265) (0.1160,2.3475) (0.1170,2.3954) (0.1180,2.4051) (0.1190,2.4341) (0.1200,2.4509) (0.1210,2.4744) (0.1220,2.4467) (0.1230,2.4707) (0.1240,2.4839) (0.1250,2.4814) (0.1260,2.5187) (0.1270,2.5669) (0.1280,2.5670) (0.1290,2.5643) (0.1300,2.5935) (0.1310,2.5746) (0.1320,2.6114) (0.1330,2.6337) (0.1340,2.6240) (0.1350,2.6643) (0.1360,2.6930) (0.1370,2.7004) (0.1380,2.6951) (0.1390,2.6989) (0.1400,2.7227) (0.1410,2.7517) (0.1420,2.7511) (0.1430,2.8005) (0.1440,2.7875) (0.1450,2.8066) (0.1460,2.7979) (0.1470,2.8109) (0.1480,2.8395) (0.1490,2.8626) (0.1500,2.8514) (0.1510,2.8738) (0.1520,2.9070) (0.1530,2.9148) (0.1540,2.9200) (0.1550,2.9459) (0.1560,2.9503) (0.1570,2.9972) (0.1580,2.9515) (0.1590,2.9959) (0.1600,3.0105) (0.1610,3.0545) (0.1620,3.0471) (0.1630,3.0616) (0.1640,3.0430) (0.1650,3.0697) (0.1660,3.0838) (0.1670,3.1099) (0.1680,3.1201) (0.1690,3.1306) (0.1700,3.1943) (0.1710,3.1837) (0.1720,3.1694) (0.1730,3.1914) (0.1740,3.2173) (0.1750,3.2071) (0.1760,3.1994) (0.1770,3.2430) (0.1780,3.2445) (0.1790,3.2695) (0.1800,3.2647) (0.1810,3.3349) (0.1820,3.3113) (0.1830,3.3177) (0.1840,3.3303) (0.1850,3.3510) (0.1860,3.3350) (0.1870,3.3828) (0.1880,3.3782) (0.1890,3.3853) (0.1900,3.3988) (0.1910,3.4185) (0.1920,3.4626) (0.1930,3.4515) (0.1940,3.4833) (0.1950,3.4787) (0.1960,3.5096) (0.1970,3.4728) (0.1980,3.5394) (0.1990,3.5205) (0.2000,3.5178) (0.2010,3.5202) (0.2020,3.5722) (0.2030,3.6189) (0.2040,3.6259) (0.2050,3.5821) (0.2060,3.6387) (0.2070,3.6402) (0.2080,3.6719) (0.2090,3.6511) (0.2100,3.6798) (0.2110,3.6897) (0.2120,3.7199) (0.2130,3.6685) (0.2140,3.6794) (0.2150,3.7447) (0.2160,3.7285) (0.2170,3.7672) (0.2180,3.7811) (0.2190,3.7558) (0.2200,3.7742) (0.2210,3.7937) (0.2220,3.8337) (0.2230,3.8388) (0.2240,3.8522) (0.2250,3.8617) (0.2260,3.8489) (0.2270,3.8732) (0.2280,3.8784) (0.2290,3.8845) (0.2300,3.9141) (0.2310,3.9410) (0.2320,3.9609) (0.2330,3.9655) (0.2340,3.9538) (0.2350,3.9872) (0.2360,3.9523) (0.2370,3.9758) (0.2380,4.0398) (0.2390,4.0242) (0.2400,4.0341) (0.2410,4.0383) (0.2420,4.0964) (0.2430,4.0709) (0.2440,4.0680) (0.2450,4.0994) (0.2460,4.1455) (0.2470,4.1447) (0.2480,4.1390) (0.2490,4.1564) (0.2500,3.9138) (0.2510,3.6711) (0.2520,3.5300) (0.2530,3.4386) (0.2540,3.3682) (0.2550,3.2841) (0.2560,3.2078) (0.2570,3.1727) (0.2580,3.1048) (0.2590,3.0508) (0.2600,2.9798) (0.2610,2.9392) (0.2620,2.9126) (0.2630,2.8949) (0.2640,2.7954) (0.2650,2.7781) (0.2660,2.7528) (0.2670,2.7045) (0.2680,2.6618) (0.2690,2.6161) (0.2700,2.6257) (0.2710,2.5805) (0.2720,2.5546) (0.2730,2.4947) (0.2740,2.4822) (0.2750,2.4562) (0.2760,2.4188) (0.2770,2.3931) (0.2780,2.3819) (0.2790,2.3668) (0.2800,2.3327) (0.2810,2.3092) (0.2820,2.2962) (0.2830,2.2602) (0.2840,2.2348) (0.2850,2.2101) (0.2860,2.1974) (0.2870,2.1667) (0.2880,2.1340) (0.2890,2.1160) (0.2900,2.1182) (0.2910,2.1008) (0.2920,2.0666) (0.2930,2.0339) (0.2940,2.0211) (0.2950,1.9998) (0.2960,1.9877) (0.2970,1.9702) (0.2980,1.9784) (0.2990,1.9365) (0.3000,1.9072) (0.3010,1.8990) (0.3020,1.8801) (0.3030,1.8893) (0.3040,1.8556) (0.3050,1.8622) (0.3060,1.8507) (0.3070,1.7998) (0.3080,1.7938) (0.3090,1.7731) (0.3100,1.7672) (0.3110,1.7558) (0.3120,1.7312) (0.3130,1.7369) (0.3140,1.7015) (0.3150,1.6894) (0.3160,1.6775) (0.3170,1.6725) (0.3180,1.6515) (0.3190,1.6787) (0.3200,1.6348) (0.3210,1.6192) (0.3220,1.6005) (0.3230,1.5789) (0.3240,1.5841) (0.3250,1.5670) (0.3260,1.5757) (0.3270,1.5380) (0.3280,1.5306) (0.3290,1.5279) (0.3300,1.5035) (0.3310,1.4792) (0.3320,1.4913) (0.3330,1.4546) (0.3340,1.4916) (0.3350,1.4703) (0.3360,1.4628) (0.3370,1.4228) (0.3380,1.4502) (0.3390,1.4044) (0.3400,1.4082) (0.3410,1.4113) (0.3420,1.3825) (0.3430,1.3717) (0.3440,1.3749) (0.3450,1.3470) (0.3460,1.3565) (0.3470,1.3409) (0.3480,1.3391) (0.3490,1.3331) (0.3500,1.3165) (0.3510,1.3117) (0.3520,1.3031) (0.3530,1.2845) (0.3540,1.2930) (0.3550,1.2686) (0.3560,1.2545) (0.3570,1.2469) (0.3580,1.2322) (0.3590,1.2464) (0.3600,1.2406) (0.3610,1.2165) (0.3620,1.2106) (0.3630,1.1746) (0.3640,1.1915) (0.3650,1.1707) (0.3660,1.1856) (0.3670,1.1717) (0.3680,1.1752) (0.3690,1.1361) (0.3700,1.1522) (0.3710,1.1222) (0.3720,1.1268) (0.3730,1.1179) (0.3740,1.1066) (0.3750,1.1177) (0.3760,1.0997) (0.3770,1.1016) (0.3780,1.0845) (0.3790,1.0832) (0.3800,1.0871) (0.3810,1.0722) (0.3820,1.0595) (0.3830,1.0514) (0.3840,1.0409) (0.3850,1.0262) (0.3860,1.0272) (0.3870,1.0393) (0.3880,1.0134) (0.3890,1.0183) (0.3900,1.0055) (0.3910,0.9913) (0.3920,1.0002) (0.3930,0.9805) (0.3940,0.9821) (0.3950,0.9683) (0.3960,0.9517) (0.3970,0.9513) (0.3980,0.9568) (0.3990,0.9438) (0.4000,0.9511) (0.4010,0.9214) (0.4020,0.9223) (0.4030,0.9278) (0.4040,0.9175) (0.4050,0.9026) (0.4060,0.9086) (0.4070,0.9041) (0.4080,0.8844) (0.4090,0.8953) (0.4100,0.8784) (0.4110,0.8668) (0.4120,0.8524) (0.4130,0.8719) (0.4140,0.8540) (0.4150,0.8533) (0.4160,0.8408) (0.4170,0.8474) (0.4180,0.8333) (0.4190,0.8286) (0.4200,0.8428) (0.4210,0.8322) (0.4220,0.8128) (0.4230,0.8061) (0.4240,0.8080) (0.4250,0.8051) (0.4260,0.7979) (0.4270,0.8023) (0.4280,0.8054) (0.4290,0.7808) (0.4300,0.7837) (0.4310,0.7769) (0.4320,0.7644) (0.4330,0.7750) (0.4340,0.7653) (0.4350,0.7585) (0.4360,0.7444) (0.4370,0.7278) (0.4380,0.7463) (0.4390,0.7316) (0.4400,0.7354) (0.4410,0.7339) (0.4420,0.7182) (0.4430,0.7271) (0.4440,0.7125) (0.4450,0.7101) (0.4460,0.7159) (0.4470,0.6943) (0.4480,0.7000) (0.4490,0.6978) (0.4500,0.6919) (0.4510,0.6698) (0.4520,0.6769) (0.4530,0.6819) (0.4540,0.6745) (0.4550,0.6822) (0.4560,0.6686) (0.4570,0.6826) (0.4580,0.6614) (0.4590,0.6594) (0.4600,0.6482) (0.4610,0.6458) (0.4620,0.6363) (0.4630,0.6358) (0.4640,0.6234) (0.4650,0.6408) (0.4660,0.6314) (0.4670,0.6219) (0.4680,0.6096) (0.4690,0.6110) (0.4700,0.6137) (0.4710,0.6063) (0.4720,0.5969) (0.4730,0.5981) (0.4740,0.6011) (0.4750,0.5836) (0.4760,0.5765) (0.4770,0.5629) (0.4780,0.5625) (0.4790,0.5753) (0.4800,0.5639) (0.4810,0.5744) (0.4820,0.5681) (0.4830,0.5706) (0.4840,0.5495) (0.4850,0.5569) (0.4860,0.5499) (0.4870,0.5457) (0.4880,0.5494) (0.4890,0.5359) (0.4900,0.5425) (0.4910,0.5369) (0.4920,0.5244) (0.4930,0.5156) (0.4940,0.5268) (0.4950,0.5379) (0.4960,0.5197) (0.4970,0.5152) (0.4980,0.5062) (0.4990,0.4947) (0.5000,0.5099) (0.5010,0.4982) (0.5020,0.4910) (0.5030,0.5104) (0.5040,0.4951) (0.5050,0.4996) (0.5060,0.4930) (0.5070,0.4841) (0.5080,0.5012) (0.5090,0.4863) (0.5100,0.4746) (0.5110,0.4765) (0.5120,0.4718) (0.5130,0.4645) (0.5140,0.4708) (0.5150,0.4642) (0.5160,0.4530) (0.5170,0.4628) (0.5180,0.4633) (0.5190,0.4513) (0.5200,0.4544) (0.5210,0.4484) (0.5220,0.4468) (0.5230,0.4450) (0.5240,0.4382) (0.5250,0.4310) (0.5260,0.4339) (0.5270,0.4333) (0.5280,0.4294) (0.5290,0.4290) (0.5300,0.4307) (0.5310,0.4252) (0.5320,0.4186) (0.5330,0.4199) (0.5340,0.4087) (0.5350,0.3980) (0.5360,0.4080) (0.5370,0.4037) (0.5380,0.3907) (0.5390,0.3912) (0.5400,0.4033) (0.5410,0.3867) (0.5420,0.3776) (0.5430,0.3910) (0.5440,0.3927) (0.5450,0.3873) (0.5460,0.3809) (0.5470,0.3828) (0.5480,0.3678) (0.5490,0.3708) (0.5500,0.3685) (0.5510,0.3657) (0.5520,0.3641) (0.5530,0.3655) (0.5540,0.3667) (0.5550,0.3552) (0.5560,0.3549) (0.5570,0.3552) (0.5580,0.3509) (0.5590,0.3570) (0.5600,0.3510) (0.5610,0.3419) (0.5620,0.3442) (0.5630,0.3407) (0.5640,0.3335) (0.5650,0.3311) (0.5660,0.3300) (0.5670,0.3348) (0.5680,0.3340) (0.5690,0.3233) (0.5700,0.3229) (0.5710,0.3271) (0.5720,0.3239) (0.5730,0.3124) (0.5740,0.3337) (0.5750,0.3244) (0.5760,0.3057) (0.5770,0.3052) (0.5780,0.3118) (0.5790,0.3091) (0.5800,0.3111) (0.5810,0.3093) (0.5820,0.3020) (0.5830,0.2951) (0.5840,0.2952) (0.5850,0.2943) (0.5860,0.2984) (0.5870,0.2955) (0.5880,0.2803) (0.5890,0.2844) (0.5900,0.2827) (0.5910,0.2838) (0.5920,0.2798) (0.5930,0.2793) (0.5940,0.2870) (0.5950,0.2796) (0.5960,0.2856) (0.5970,0.2737) (0.5980,0.2735) (0.5990,0.2668) (0.6000,0.2621) (0.6010,0.2624) (0.6020,0.2568) (0.6030,0.2577) (0.6040,0.2596) (0.6050,0.2674) (0.6060,0.2623) (0.6070,0.2599) (0.6080,0.2541) (0.6090,0.2458) (0.6100,0.2474) (0.6110,0.2489) (0.6120,0.2448) (0.6130,0.2493) (0.6140,0.2423) (0.6150,0.2413) (0.6160,0.2398) (0.6170,0.2436) (0.6180,0.2387) (0.6190,0.2283) (0.6200,0.2411) (0.6210,0.2373) (0.6220,0.2323) (0.6230,0.2318) (0.6240,0.2181) (0.6250,0.2230) (0.6260,0.2223) (0.6270,0.2232) (0.6280,0.2237) (0.6290,0.2280) (0.6300,0.2183) (0.6310,0.2183) (0.6320,0.2159) (0.6330,0.2147) (0.6340,0.2141) (0.6350,0.2117) (0.6360,0.2177) (0.6370,0.2097) (0.6380,0.2152) (0.6390,0.2087) (0.6400,0.2082) (0.6410,0.1972) (0.6420,0.2021) (0.6430,0.2005) (0.6440,0.1971) (0.6450,0.1949) (0.6460,0.1929) (0.6470,0.1984) (0.6480,0.1946) (0.6490,0.1923) (0.6500,0.1909) (0.6510,0.1886) (0.6520,0.1915) (0.6530,0.1925) (0.6540,0.1920) (0.6550,0.1753) (0.6560,0.1905) (0.6570,0.1829) (0.6580,0.1823) (0.6590,0.1738) (0.6600,0.1805) (0.6610,0.1725) (0.6620,0.1732) (0.6630,0.1682) (0.6640,0.1679) (0.6650,0.1694) (0.6660,0.1699) (0.6670,0.1727) (0.6680,0.1625) (0.6690,0.1653) (0.6700,0.1647) (0.6710,0.1639) (0.6720,0.1553) (0.6730,0.1551) (0.6740,0.1571) (0.6750,0.1611) (0.6760,0.1535) (0.6770,0.1552) (0.6780,0.1525) (0.6790,0.1618) (0.6800,0.1470) (0.6810,0.1423) (0.6820,0.1480) (0.6830,0.1546) (0.6840,0.1495) (0.6850,0.1465) (0.6860,0.1441) (0.6870,0.1403) (0.6880,0.1434) (0.6890,0.1376) (0.6900,0.1412) (0.6910,0.1354) (0.6920,0.1425) (0.6930,0.1351) (0.6940,0.1350) (0.6950,0.1292) (0.6960,0.1244) (0.6970,0.1307) (0.6980,0.1266) (0.6990,0.1206) (0.7000,0.1297) (0.7010,0.1309) (0.7020,0.1290) (0.7030,0.1255) (0.7040,0.1208) (0.7050,0.1249) (0.7060,0.1147) (0.7070,0.1211) (0.7080,0.1179) (0.7090,0.1253) (0.7100,0.1134) (0.7110,0.1229) (0.7120,0.1146) (0.7130,0.1139) (0.7140,0.1137) (0.7150,0.1145) (0.7160,0.1114) (0.7170,0.1160) (0.7180,0.1095) (0.7190,0.1122) (0.7200,0.1091) (0.7210,0.1093) (0.7220,0.1055) (0.7230,0.1080) (0.7240,0.1057) (0.7250,0.1091) (0.7260,0.1023) (0.7270,0.1049) (0.7280,0.1006) (0.7290,0.1075) (0.7300,0.0944) (0.7310,0.0954) (0.7320,0.0996) (0.7330,0.0935) (0.7340,0.0972) (0.7350,0.1008) (0.7360,0.0936) (0.7370,0.0973) (0.7380,0.0904) (0.7390,0.0926) (0.7400,0.0899) (0.7410,0.0902) (0.7420,0.0876) (0.7430,0.0870) (0.7440,0.0878) (0.7450,0.0880) (0.7460,0.0829) (0.7470,0.0853) (0.7480,0.0831) (0.7490,0.0844) (0.7500,0.0815) (0.7510,0.0837) (0.7520,0.0795) (0.7530,0.0808) (0.7540,0.0790) (0.7550,0.0795) (0.7560,0.0756) (0.7570,0.0804) (0.7580,0.0860) (0.7590,0.0776) (0.7600,0.0781) (0.7610,0.0726) (0.7620,0.0723) (0.7630,0.0747) (0.7640,0.0746) (0.7650,0.0714) (0.7660,0.0744) (0.7670,0.0707) (0.7680,0.0678) (0.7690,0.0683) (0.7700,0.0740) (0.7710,0.0680) (0.7720,0.0632) (0.7730,0.0663) (0.7740,0.0658) (0.7750,0.0675) (0.7760,0.0673) (0.7770,0.0624) (0.7780,0.0617) (0.7790,0.0647) (0.7800,0.0610) (0.7810,0.0608) (0.7820,0.0573) (0.7830,0.0604) (0.7840,0.0613) (0.7850,0.0638) (0.7860,0.0568) (0.7870,0.0598) (0.7880,0.0556) (0.7890,0.0550) (0.7900,0.0551) (0.7910,0.0553) (0.7920,0.0579) (0.7930,0.0515) (0.7940,0.0557) (0.7950,0.0514) (0.7960,0.0504) (0.7970,0.0518) (0.7980,0.0508) (0.7990,0.0503) (0.8000,0.0515) (0.8010,0.0510) (0.8020,0.0474) (0.8030,0.0518) (0.8040,0.0445) (0.8050,0.0470) (0.8060,0.0449) (0.8070,0.0442) (0.8080,0.0474) (0.8090,0.0458) (0.8100,0.0436) (0.8110,0.0480) (0.8120,0.0423) (0.8130,0.0413) (0.8140,0.0440) (0.8150,0.0416) (0.8160,0.0419) (0.8170,0.0405) (0.8180,0.0400) (0.8190,0.0371) (0.8200,0.0385) (0.8210,0.0374) (0.8220,0.0408) (0.8230,0.0386) (0.8240,0.0344) (0.8250,0.0365) (0.8260,0.0339) (0.8270,0.0347) (0.8280,0.0335) (0.8290,0.0339) (0.8300,0.0353) (0.8310,0.0346) (0.8320,0.0340) (0.8330,0.0353) (0.8340,0.0338) (0.8350,0.0357) (0.8360,0.0319) (0.8370,0.0329) (0.8380,0.0297) (0.8390,0.0307) (0.8400,0.0287) (0.8410,0.0294) (0.8420,0.0311) (0.8430,0.0307) (0.8440,0.0305) (0.8450,0.0293) (0.8460,0.0295) (0.8470,0.0293) (0.8480,0.0276) (0.8490,0.0278) (0.8500,0.0276) (0.8510,0.0252) (0.8520,0.0256) (0.8530,0.0255) (0.8540,0.0260) (0.8550,0.0255) (0.8560,0.0227) (0.8570,0.0263) (0.8580,0.0258) (0.8590,0.0221) (0.8600,0.0221) (0.8610,0.0221) (0.8620,0.0205) (0.8630,0.0219) (0.8640,0.0207) (0.8650,0.0216) (0.8660,0.0216) (0.8670,0.0220) (0.8680,0.0213) (0.8690,0.0194) (0.8700,0.0176) (0.8710,0.0221) (0.8720,0.0183) (0.8730,0.0193) (0.8740,0.0187) (0.8750,0.0164) (0.8760,0.0174) (0.8770,0.0164) (0.8780,0.0197) (0.8790,0.0161) (0.8800,0.0172) (0.8810,0.0167) (0.8820,0.0149) (0.8830,0.0172) (0.8840,0.0156) (0.8850,0.0146) (0.8860,0.0173) (0.8870,0.0156) (0.8880,0.0141) (0.8890,0.0127) (0.8900,0.0140) (0.8910,0.0129) (0.8920,0.0120) (0.8930,0.0137) (0.8940,0.0130) (0.8950,0.0127) (0.8960,0.0134) (0.8970,0.0113) (0.8980,0.0097) (0.8990,0.0115) (0.9000,0.0128) (0.9010,0.0085) (0.9020,0.0109) (0.9030,0.0105) (0.9040,0.0109) (0.9050,0.0102) (0.9060,0.0098) (0.9070,0.0102) (0.9080,0.0077) (0.9090,0.0083) (0.9100,0.0083) (0.9110,0.0086) (0.9120,0.0087) (0.9130,0.0075) (0.9140,0.0084) (0.9150,0.0069) (0.9160,0.0077) (0.9170,0.0053) (0.9180,0.0065) (0.9190,0.0078) (0.9200,0.0078) (0.9210,0.0067) (0.9220,0.0079) (0.9230,0.0071) (0.9240,0.0057) (0.9250,0.0059) (0.9260,0.0066) (0.9270,0.0053) (0.9280,0.0063) (0.9290,0.0045) (0.9300,0.0052) (0.9310,0.0042) (0.9320,0.0048) (0.9330,0.0047) (0.9340,0.0047) (0.9350,0.0049) (0.9360,0.0049) (0.9370,0.0037) (0.9380,0.0031) (0.9390,0.0033) (0.9400,0.0039) (0.9410,0.0018) (0.9420,0.0027) (0.9430,0.0032) (0.9440,0.0037) (0.9450,0.0032) (0.9460,0.0024) (0.9470,0.0027) (0.9480,0.0035) (0.9490,0.0021) (0.9500,0.0027) (0.9510,0.0025) (0.9520,0.0028) (0.9530,0.0020) (0.9540,0.0018) (0.9550,0.0019) (0.9560,0.0019) (0.9570,0.0014) (0.9580,0.0021) (0.9590,0.0018) (0.9600,0.0016) (0.9610,0.0018) (0.9620,0.0018) (0.9630,0.0009) (0.9640,0.0018) (0.9650,0.0014) (0.9660,0.0012) (0.9670,0.0014) (0.9680,0.0014) (0.9690,0.0018) (0.9700,0.0016) (0.9710,0.0011) (0.9720,0.0009) (0.9730,0.0008) (0.9740,0.0004) (0.9750,0.0006) (0.9760,0.0005) (0.9770,0.0007) (0.9780,0.0006) (0.9790,0.0005) (0.9800,0.0005) (0.9810,0.0006) (0.9820,0.0002) (0.9830,0.0002) (0.9840,0.0005) (0.9850,0.0000) (0.9860,0.0001) (0.9870,0.0004) (0.9880,0.0002) (0.9890,0.0001) (0.9900,0.0001) (0.9910,0.0001) (0.9920,0.0000) (0.9930,0.0000) (0.9940,0.0000) (0.9950,0.0000) (0.9960,0.0000) (0.9970,0.0000) (0.9980,0.0000) (0.9990,0.0000)};
	\end{axis}
	\end{tikzpicture}
	\caption{Experimental probability density function of $Q$}\label{fig:SharkFin}
\end{figure}

To test the simulation itself we ran an experiment for the mean area.  The observed average value of the ratio $\myQ = |\Delta RST|/|\Delta ABC|$ is expected to approach $\mu_1=1/4$ as the sample size is increased. A Java application was employed to study the deviation of the experimental average from its theoretical value, $\mathrm{err} = E_{exp}[\myQ] - E[\myQ]$. From Central Limit Theorem $\mathrm{err}$ has an approximately normal distribution with a standard deviation of $\sigma/\sqrt{n}$, where $\sigma =\sqrt{\mu_2-\mu_1^{2}}=1/4\sqrt 3$, and $n$ is the sample size. As such, $E_{exp}[|\mathrm{err}|]$, the experimental average value of $|\mathrm{err}|$,    is to approach $\sqrt{2/n\pi} \sigma$. 
We ran the simulation with sample of sizes of $n=10^2$ to $10^8$ and averaged $|\mathrm{err}|$ over $50$ trials. The result is  displayed in Table \ref{table:MonteCarloTest}.

\begin{table}[H]
	\centering
	\begin{tabular}{ c | c  c  c  c  c  c  c  }
		$n$ & $10^2$ & $10^3$ & $10^4$ & $10^5$ & $10^6$ & $10^7$ & $10^8$ \\\\
$E_{exp}[|err|]$ & 1.3E-2 & 3.0E-3 & 1.1E-3& 3.8E-4&  1.2E-4& 3.8E-5&1.2E-5 \\
$\sqrt{2/n\pi}\sigma$ &1.1E-2&3.6E-3&1.1E-3&3.6E-4&1.1E-4&3.6E-5&1.1E-5\\
\end{tabular}
	\caption{A test of the Monte Carlo simulation.\label{table:MonteCarloTest}}
\end{table}

Note that the average observed error decreases approximately by a factor of 10 for every increase of a factor of 100 in sample size.

\section{Cumulative and Probability Density Functions}
In this section we will derive the cumulative density function, $\CDF$, and the probability density function, $\PDF$, of the area quotient $\myQ$.

We have $\CDF(c) = \mathrm{Vol} \left\{ (r,s,t) \in [0,1]^3 \mid \myQ \leq c \right\}$. By a rotation of coordinate system one sees that $Q^{-1}(c)$ is a hyperboloid. For $c \in [0,1/4)$ the surface is a hyperboloid of one sheet, for $c=1/4$ it is a double cone, and for $c \in (1/4,1]$ it is a hyperboloid of two sheets. For $c<1/4$, $\CDF(c)$ is equal to the volume of a region similar to Figure \ref{fig:VolumesA}. For $c>1/4$ it is equal to the volume of a region similar to Figure \ref{fig:VolumesB}.

\begin{figure}[h]
	\begin{subfigure}{0.5\textwidth}
		\includegraphics[scale=0.35]{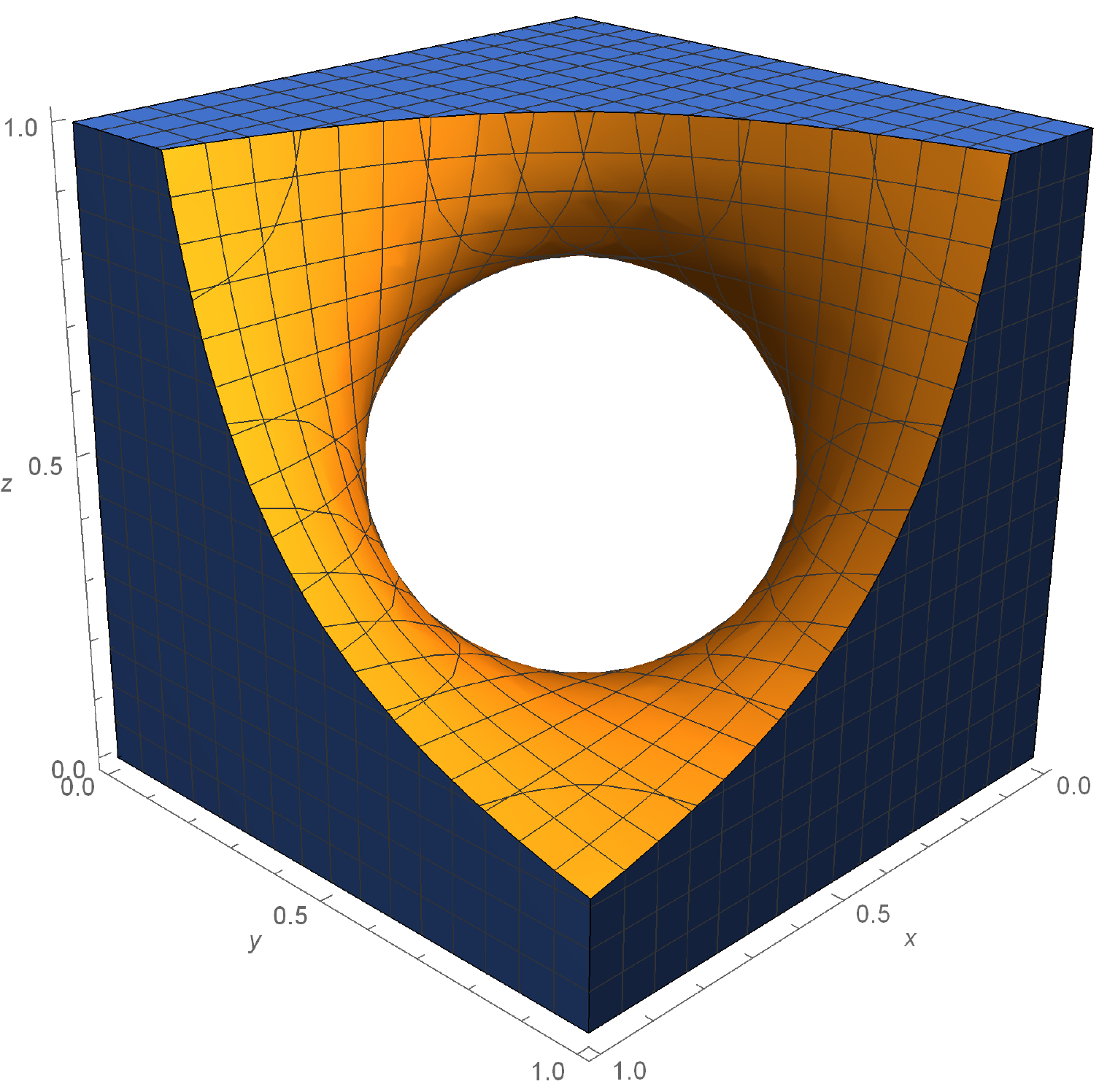}
		\caption{$c \in [0,1/4)$} \label{fig:VolumesA}
	\end{subfigure}
	\begin{subfigure}{0.5\textwidth}
		\includegraphics[scale=0.35]{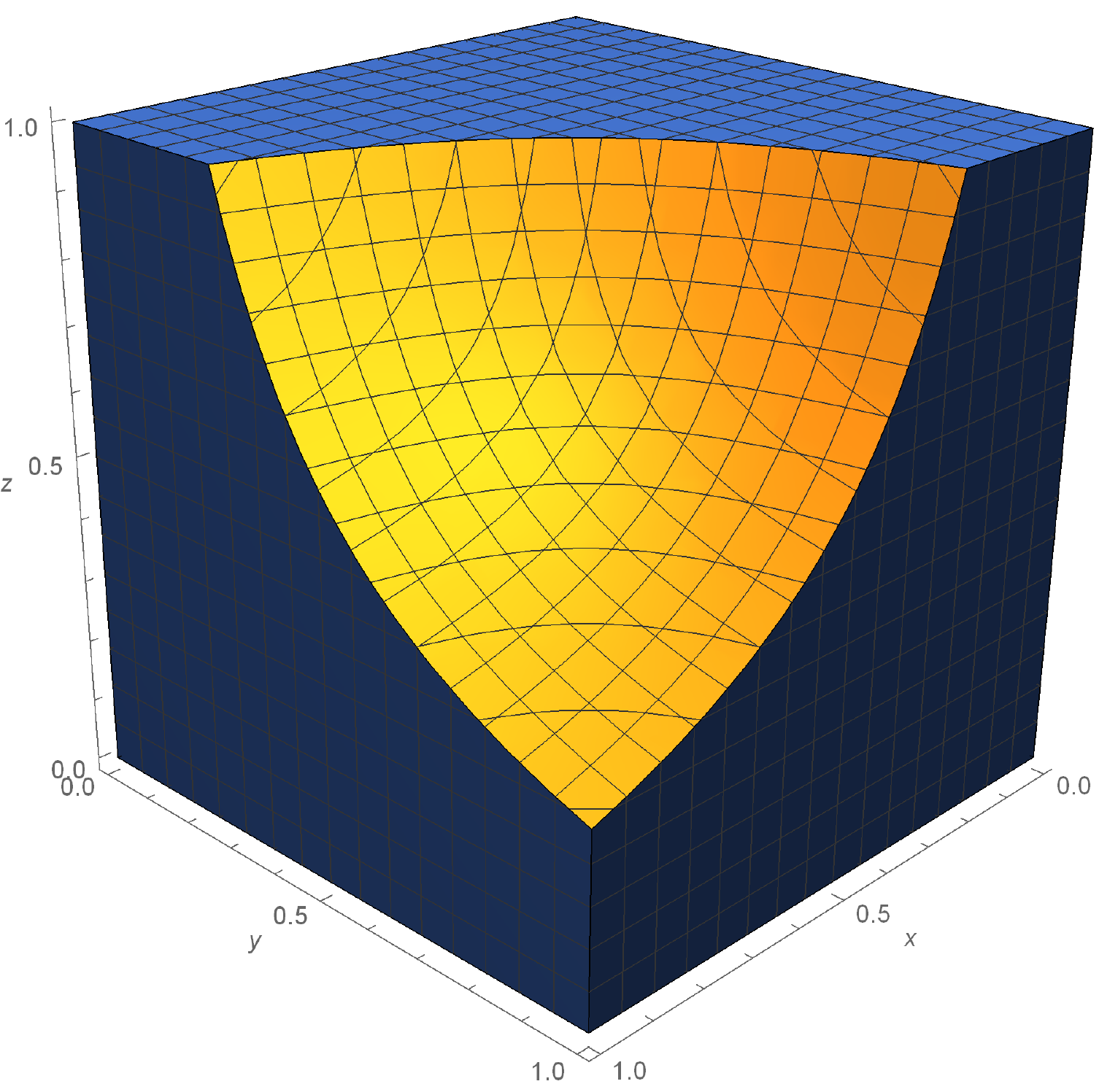}
		\caption{$c \in (1/4,1]$} \label{fig:VolumesB}
	\end{subfigure}
	\caption{Contour plots of $\myQ$ for $c=1/5$ and $1/3$ respectively.}\label{fig:Volumes}
\end{figure}

To visualize the integration volume we may utilize the following Mathematica command:
\begin{center}
	\begin{lstlisting} [caption={Volume of displayed region is $\CDF(c)$}]
	Q[x_,y_,z_]:=(1-x)*(1-y)*(1-z)+x*y*z
	Manipulate[ RegionPlot3D[ Q[x,y,z]<c,{x,0,1},{y,0,1},{z,0,1},
	AxesLabel->{X,Y,Z} ],{c,0,1,0.01} ]
	\end{lstlisting}
\end{center}
and to see the slices used in the integration process we may utilize the following:
\begin{center}
	\begin{lstlisting} [caption={Intersection of slicing planes  with the hyperboloid}]
	Q[x_,y_,z_]:=(1-x)*(1-y)*(1-z)+x*y*z
	Manipulate[ ContourPlot3D[  {Q[x,y,z]==c,x==a},  {x,0,1},{y,0,1},{z,0,1},
	AxesLabel->{X,Y,Z} ], {c,0,1,0.01}, {a,0,1,0.001} ]
	\end{lstlisting}
\end{center}

The derivation of $\CDF(c)$ involves many integration steps, mostly of the type $\int P(t) \ln(R(t)) dt$, where $P$ and $R$ are polynomials. These can be done by integration by parts. We employed Mathematica’s integration routine followed by hand simpliﬁcation. The summary is displayed here, and the derivation is detailed in the next section. For the cumulative density function we find 
\begin{equation}
	\CDF(c) = \left\{\begin{array}{ll}
	c-(3c-\frac{1}{2})\ln c+(1-4c)^{3/2}\tanh^{-1}\sqrt{1-4c}, & \text{for } 0\leq c \leq \frac{1}{4}\\
	\frac{1}{4}(1+\ln4 ), & \text{for } c=\frac{1}{4}\\
	c-(3c-\frac{1}{2})\ln c+(4c-1)^{3/2}(\tan^{-1}\sqrt{4c-1}-\frac{\pi}{3}) & \text{for } \frac{1}{4}\leq c\leq 1.
	\end{array}\right.
\end{equation}
By differentiating $\CDF(c)$ we arrive at
\begin{equation}\label {eq: PDFformula}
	\PDF(c) = \left\{\begin{array}{ll}
	-3\ln c -6\sqrt{1-4c}\tanh^{-1}\sqrt{1-4c}, & \text{for } 0\leq c \leq \frac{1}{4}\\
	3\ln 4, & \text{for } c=\frac{1}{4}\\
	-3\ln c + 2\sqrt{4c-1}\left(-\pi+3\tan^{-1}\sqrt{4c-1}\right) & \text{for } \frac{1}{4}\leq c\leq 1.
	\end{array}\right.
\end{equation}
The graphs of these two distributions are displayed in Figure \ref{fig:DisPlots}.
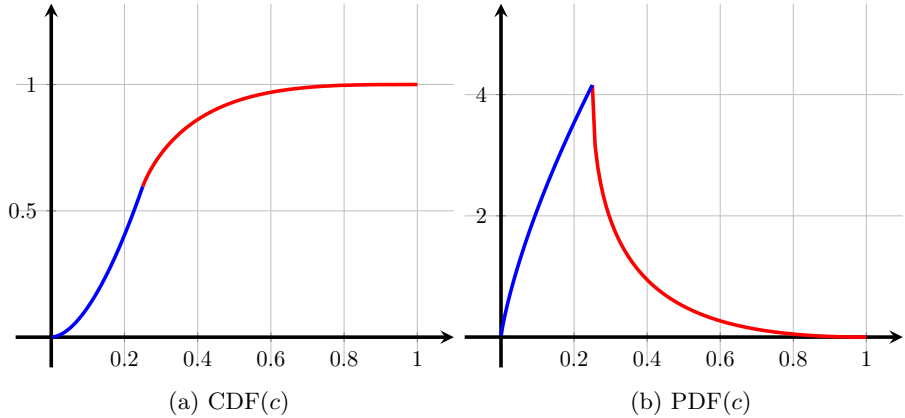
\begin{figure}[h]
	\begin{subfigure}{0.5\textwidth}
		\begin{tikzpicture}[scale = 0.85]
		\begin{axis}[ultra thick, grid=both,
		xmax=1,ymax=1.2,
		axis lines=middle,
		restrict y to domain=-7:12,
		enlargelimits]
		\addplot[blue,domain=0:1/4,samples=100]  {\x-(3*\x-(1/2))*ln(\x)+pow(1-4*\x,3/2)*arctanh(sqrt(1-4*\x))}; 
		\addplot[red,domain=1/4:1,samples=100]  {\x-(3*\x-(1/2))*ln(\x)+pow(4*\x-1,3/2)*((3.141592/180)*atan(sqrt(4*\x-1))-(3.141592/3))};
		\end{axis}
		\end{tikzpicture}
		\caption{$\CDF(c)$} \label{fig:DisPlotsB}
	\end{subfigure}
	\begin{subfigure}{0.5\textwidth}
		\begin{tikzpicture}[scale = 0.85]
		\begin{axis}[ultra thick, grid=both,
		xmax=1,ymax=5,
		axis lines=middle,
		restrict y to domain=-7:12,
		enlargelimits]
		\addplot[red,domain=1/4:1,samples=100]  {-3*ln(\x)+2*sqrt(4*\x-1)*(-3.1415926+3*(3.141592/180)*atan(sqrt(4*\x-1)))}; 
		\addplot[blue,domain=0:1/4,samples=500]  {-3*ln(\x)-6*sqrt(1-4*\x)*arctanh(sqrt(1-4*\x))};
		\end{axis}
		\end{tikzpicture}
		\caption{$\PDF(c)$} \label{fig:DisPlotsA}
	\end{subfigure}
	\caption{Plots of $\CDF(c)$ and $\PDF(c)$} \label{fig:DisPlots}
\end{figure}

We verify that the experimental PDF, Figure \ref{fig:SharkFin},  is close to exact result \eqref{eq: PDFformula} and Figure \ref{fig:DisPlotsA}.

\subsection{Derivation of \boldmath{$\CDF(c)$} and \boldmath{$\PDF(c)$} for \boldmath{$c \in (1/4,1]$}}
Given a fixed value of $c \in (1/4,1]$, the inscribed triangle's area, we can rewrite $\myQ = (rst) + (1-r)(1-s)(1-t) = c$ as $r(s,t,c)$, a function of $s$, $t$, and $c$ as follows
\begin{equation}
	r(s,t,c) = \frac{c-1-st+s+t}{s+t-1}.
\end{equation}
Note that when $r=1$ we get $st=c$ and as a result the integration limits at a fixed $t$ will be from $s=c/t$ to $s=1$, and $t$ will have a range from $c$ to 1. 
The volume of the region in Figure \ref{fig:VolumesA} can be calculated through its complement
\begin{equation}
	\CDF(c) = 1-2\int_{c}^{1}\int_{\tfrac{c}{t}}^{1}(1-r(s,t,c)) ds dt.
\end{equation}
To perform this calculation we use the Mathematica command
\begin{center}
	\begin{lstlisting}[caption={Mathematica integration code for $c \in (1/4,1)$  }]
	r[s_,t_,c_]:=(c-1-s*t+s+t)/(s+t-1)
	FullSimplify[Assuming[1>c>1/4,
	1-2*Integrate[1-r[s,t,c],{t,c,1},{s,c/t,1}] ]]
	\end{lstlisting}\label{lst: integrationR}
\end{center}
and we arrive at the following expression 
\begin{equation}
	\CDF(c) = c-(3c-\frac{1}{2})\ln c -\frac{(4c-1)^{\tfrac{3}{2}}}{3}\left(\tan^{-1}\left(\frac{1}{\sqrt{4c-1}}\right)-\tan^{-1}\left(\frac{2c-1}{\sqrt{4c-1}}\right)\right),
\end{equation}
which can be simplified, using a trig identity explained below, to produce
\begin{equation}
	\CDF(c) = c-(3c-\frac{1}{2})\ln c + (4c-1)^{\tfrac{3}{2}}\left(\tan^{-1}(\sqrt{4c-1})-\frac{\pi}{3}\right).
\end{equation}
By differentiation we can find $\PDF(c)$
\begin{equation}
	\PDF(c) = \frac{d}{dc}\CDF(c) = 2\sqrt{4c-1}\left(3\tan^{-1}\sqrt{4c-1}-\pi\right)-3\ln c.
\end{equation}
\begin{lemma}[A Machin-like Identity]
	For $c>\frac{1}{4}$ we have
	\begin{equation}
		\tan^{-1}\left(\frac{1}{\sqrt{4c-1}}\right)-\tan^{-1}\left(\frac{2c-1}{\sqrt{4c-1}}\right) = \pi-3\tan^{-1}\sqrt{4c-1}.
	\end{equation}
\end{lemma}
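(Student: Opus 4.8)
The plan is to collapse the left-hand side to a single arctangent via the substitution $u=\sqrt{4c-1}$, which is real and positive precisely because $c>\tfrac14$, and then to track the branch of $\tan^{-1}$ with care. Writing $c=(u^2+1)/4$ gives $2c-1=(u^2-1)/2$, so the two arguments on the left become $1/u$ and $(u^2-1)/(2u)$, while the target right-hand side is $\pi-3\tan^{-1}u$. Introducing $\theta=\tan^{-1}u$, note that $u>0$ forces $\theta\in\bigl(0,\tfrac{\pi}{2}\bigr)$. The first term is immediate: for $u>0$ one has $\tan^{-1}(1/u)=\tfrac{\pi}{2}-\theta$.

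The crux is the second term. Using the double-angle formula $\tan 2\theta=2u/(1-u^2)$, I would rewrite $\dfrac{u^2-1}{2u}=-\cot 2\theta=\tan\!\bigl(2\theta-\tfrac{\pi}{2}\bigr)$, so that the identity reduces to evaluating $\tan^{-1}\!\bigl(\tan(2\theta-\tfrac{\pi}{2})\bigr)$. This is exactly where the main obstacle lies: arctangent inverts tangent only modulo $\pi$, so a naive cancellation could be off by an integer multiple of $\pi$, and getting this constant wrong is precisely what makes Machin-like identities delicate. The resolution is a range check rather than a computation: since $\theta\in\bigl(0,\tfrac{\pi}{2}\bigr)$ we have $2\theta-\tfrac{\pi}{2}\in\bigl(-\tfrac{\pi}{2},\tfrac{\pi}{2}\bigr)$, the principal range, so $\tan^{-1}\!\bigl(\tan(2\theta-\tfrac{\pi}{2})\bigr)=2\theta-\tfrac{\pi}{2}$ with no correction term. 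Assembling the pieces then yields $\bigl(\tfrac{\pi}{2}-\theta\bigr)-\bigl(2\theta-\tfrac{\pi}{2}\bigr)=\pi-3\theta=\pi-3\tan^{-1}\sqrt{4c-1}$, which is the claim.

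As an independent safeguard against a branch error, I would also verify the identity by the differentiate-and-pin-a-constant method. Let $f(c)$ and $g(c)$ denote the left and right sides; a direct computation shows $f'(c)=g'(c)=-\dfrac{3}{2c\sqrt{4c-1}}$ on the connected interval $c>\tfrac14$, so $f-g$ is constant there. Evaluating at the convenient point $c=\tfrac12$, where $\sqrt{4c-1}=1$ and $2c-1=0$, gives $f=\tan^{-1}1-\tan^{-1}0=\tfrac{\pi}{4}$ and $g=\pi-3\tan^{-1}1=\tfrac{\pi}{4}$; hence the constant is zero and $f\equiv g$. This second route confirms the branch selection made in the substitution argument, and its agreement with the boundary behavior as $c\to\tfrac14^+$ (both sides tend to $\pi$) provides a further consistency check.
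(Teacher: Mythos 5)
Your proof is correct, and your primary argument takes a genuinely different route from the paper. The paper proves the lemma exactly by what you call your ``independent safeguard'': it notes that both sides have derivative $-\tfrac{3}{2}c^{-1}(4c-1)^{-1/2}$ on the connected interval $c>\tfrac14$ and that both sides vanish at $c=1$, so their difference is identically zero (you pin the constant at $c=\tfrac12$ instead, where both sides equal $\tfrac{\pi}{4}$; either evaluation point works). Your main substitution argument --- setting $u=\sqrt{4c-1}$, $\theta=\tan^{-1}u$, using $\tan^{-1}(1/u)=\tfrac{\pi}{2}-\theta$ for $u>0$, rewriting $\tfrac{u^2-1}{2u}=\tan\bigl(2\theta-\tfrac{\pi}{2}\bigr)$, and then checking that $2\theta-\tfrac{\pi}{2}$ lies in the principal range so that no multiple-of-$\pi$ correction is needed --- does not appear in the paper at all. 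It buys something the differentiation route cannot: a structural explanation of where the constant $\pi$ on the right-hand side comes from (the two principal-branch corrections), rather than a mere numerical confirmation at a single point. Conversely, the paper's route is shorter and sidesteps all branch bookkeeping, since any branch discrepancy between the two sides is locally constant and is eliminated by one evaluation. Presenting both, as you do, is sound; either argument alone suffices, and both of yours are carried out without error.
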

Note that  the derivatives of both sides are equal to $-(3/2) c^{-1} (4c-1)^{-1/2}$, and  for $c=1$ both sides are equal to zero. Hence the identity is valid.
\subsection{Derivation of CDF(c) and PDF(c) for \boldmath{$c \in [0,1/4)$}}
Due to the presence of a hole in the middle of the corresponding  volume, this integration is more involved than the previous case.
To delegate the segmentation of the integral to Mathematica we may use the Boole command, then the calculation of $\CDF(c)$ for $c \in [0,1/4)$ can be done via the following
\begin{lstlisting}[caption={Mathematica Integration for $c \in (0,1/4)$} \label{lst: integrationR}]
Assuming[0<c<1/4,
Integrate[Boole[x*y*z+(1-x)*(1-y)*(1-z)<=c],{x,0,1},{y,0,1},{z,0,1}]]
FullSimplify[% // TrigToExp, 0 < c < 1/4]
\end{lstlisting}
Which results in
\begin{equation}
	\begin{split}
		&\CDF(c) = \frac{1}{12\sqrt{1-4c}}\Big[12c(\sqrt{1-4c}-12c\log(2)+\log(32))\\
		&+12c(4c-1)\log(1-\sqrt{1-4c})+4c(28c-11)\log(1+\sqrt{1-4c})\\
		&+3\log(1+\sqrt{1-4c}-2c)+4\log\left(\frac{1+\sqrt{1-4c}}{c}\right)+6\sqrt{1-4c}\log(c)\\
		&-2c((20c-7)\log(1-\sqrt{1-4c}-2c)+(9-12c)\log(1+\sqrt{1-4c}-2c)\\
		&+2(9\sqrt{1-4c}+16c-8)\log(c))+\log\left(\frac{-1}{64(\sqrt{1-4c}+2c-1)}\right)\Big].
	\end{split}
\end{equation}
To simplify the above, notice that $(1 \pm \sqrt{1-4c})^2 = 2(1-2c \pm \sqrt{1-4c})$, and $\tanh^{-1}(a) = \frac{1}{2}\ln(\frac{1+a}{1-a})$. After further algebraic simplification, the equation becomes
\begin{equation}
	\CDF(c) = c-(3c-\frac{1}{2})\ln c+(1-4c)^{3/2}\tanh^{-1}\sqrt{1-4c}.
\end{equation}
Upon differentiation of $\CDF(c)$ the PDF is found to be
\begin{equation}
	\PDF(c) = -3\ln c -6\sqrt{1-4c}\tanh^{-1}\sqrt{1-4c}.
\end{equation}

\section*{Future Research Directions}
We will  extend the current findings  in several directions. First, the case of tetrahedron inscribed-tetrahedron picking appears as a natural extension. Next, the number theoretic properties of the integer sequence in \eqref{nthmoment}  will be investigated. Finally we notice that when we extend $\PDF(c)$ from $(1/4,1]$, as a complex function, to $[0,1/4)$ then its real part is same as the $\PDF(c)$ for $[0,1/4)$. An explanation of this phenomena would be of interest.

\section*{Acknowledgments}
I thank Dr. Patterson for supervising this research.

\end{document}